\newtheorem{Th}{Theorem}
\newtheorem{Lm}{Lemma}
\theoremstyle{definition}
\newtheorem{Def}{Definition}
\newtheorem{Rem}{Remark}
\newtheorem*{Nota}{Notation}
\renewcommand{\ge}{\geqslant}
\renewcommand{\le}{\leqslant}
\newcommand{\zcz}{z\ii\C[[z\ii]]}
\newcommand{\be}{\beta}
\newcommand{\de}{\delta}
\newcommand{\De}{\Delta}
\newcommand{\ga}{\gamma}
\newcommand{\Ga}{\Gamma}
\newcommand{\sig}{\sigma}
\renewcommand{\th}{\theta}
\newcommand{\ph}{\varphi}
\newcommand{\eps}{\varepsilon}
\newcommand{\ka}{\kappa}
\newcommand{\la}{\lambda}
\newcommand{\Om}{\Omega}
\newcommand{\ze}{\zeta}
\newcommand{\dd}{{\mathrm d}}      
\newcommand{\ee}{{\mathrm e}}      
\newcommand{\id}{\operatorname{id}}
\newcommand{\ID}{\operatorname{Id}}
\newcommand{\defeq}{\coloneqq} 
\newcommand{\col}{\colon\thinspace}          
\newcommand{\ii}{^{-1}}
\newcommand{\ti}{\tilde}
\newcommand{\ens}{\enspace}
\newcommand{\ie}{\emph{i.e.}\ }
\newcommand{\resp}{\emph{resp.}\ }
\DeclareMathOperator{\IM}{Im}         
 \DeclareMathOperator{\RE}{Re}        
\DeclareMathOperator{\Log}{Log}       
\DeclareMathOperator{\dist}{dist}     
\newcommand{\cont}{\operatorname{cont}}
\newcommand{\I}{\mathrm{i}}
\newcommand{\C}{\mathbb{C}}      
\newcommand{\R}{\mathbb{R}}        
\newcommand{\Z}{\mathbb{Z}}        
\newcommand{\cL}{ {\cal L }}
\newcommand{\gB}{\mathscr B}       
\newcommand{\gP}{\mathscr P}       
\newcommand{\gQ}{\mathscr Q}       
\newcommand{\gR}{\mathscr R}       
\newcommand{\beglabel}[1]{\begin{equation}	\label{#1}}
\newcommand{\elabel}{\end{equation}}
\DeclarePairedDelimiter\abs{\lvert}{\rvert}%
\newcommand{\up}{^\textrm{up}}
\newcommand{\low}{^\textrm{low}}
\newcommand{\uplow}{^\textrm{up/low}}
\begin{document}

\thispagestyle{empty}
\begin{center}
{\bf \LARGE
The resurgent character of the Fatou coordinates\\[1.5ex] of a simple
  parabolic germ}\\[3ex]

Artem Dudko, David Sauzin
\end{center}
\begin{abstract}
  Given a holomorphic germ at the origin of~$\C$ with a simple
  parabolic fixed point, the local dynamics is classically described
  by means of pairs of attracting and repelling Fatou coordinates
  and the corresponding pairs of horn maps, of crucial importance for
  \'Ecalle-Voronin's classification result and the definition of the
  parabolic renormalization operator.
  We revisit \'Ecalle's approach to the construction of Fatou
  coordinates, which relies on Borel-Laplace summation, and give an original
  and self-contained proof of their resurgent character.
\medskip

\noindent
{\footnotesize \emph{Keywords:}
Complex dynamics, Ecalle-Voronin invariants, Resurgent functions.}
\end{abstract}

\section{Resurgence and summability of the formal Fatou coordinate}

Let us give ourselves a simple parabolic germ of holomorphic map at the
origin of~$\C$, \ie of the form
\beglabel{eqdefF}
F(w) = w + c w^2 + O(w^3) \in w\C\{w\}
\elabel
with $c \in \C^*$.
We rewrite it at~$\infty$ by means of the change of coordinate $z =
-1/ c w$:
\beglabel{eqdeff}
f(z) \defeq -\frac{1}{c F\big(-\frac{1}{c z}\big)} =
z + 1 + a(z),
\qquad
a(z) = -\rho z\ii + O(z^{-2}) \in z\ii\C\{z\ii\}
\elabel
with some $\rho\in\C$.
For every $R>0$ and $\de\in(0,\pi/2)$, we introduce the notations
\begin{align*}
\gP^+_{R,\de} & \defeq \big\{
r\,\ee^{\I\th} \in \C \mid r>R, \; -\tfrac{\pi}{2}-\de < \th < \tfrac{\pi}{2}+\de
\big\},\\[1ex]
\gP^-_{R,\de} & \defeq \big\{
r\,\ee^{\I\th} \in \C \mid r>R, \; \tfrac{\pi}{2}-\de < \th < \tfrac{3\pi}{2}+\de
\big\}.
\end{align*}
%


\begin{Def}
A \emph{pair of Fatou coordinates at~$\infty$} is a pair $(v^+,v^-)$ of
univalent maps
\[
v^+ \col \gP^+_{R,\de} \to \C,
\qquad
v^- \col \gP^-_{R,\de} \to \C,
\]
for some $R$ and~$\de$, such that
\beglabel{eqvpm}
v^+ \circ f = v^+ +1,
\qquad
v^- \circ f = v^- +1.
\elabel
\end{Def}


It is well-known that pairs of Fatou coordinates exist and are unique
up to the addition of a free pair of constants
$(c^+,c^-) \in \C^2$.
We shall see how to recover these facts by means of Borel-Laplace summation.


\begin{Nota}
Let $b(z) \defeq a(z-1)$ and
$b_1(z) \defeq b(z) + \rho\log\frac{1+z\ii b(z)}{1-z\ii}$, so that
$b(z) \in z\ii\C\{z\ii\}$ and $b_1(z) \in z^{-2}\C\{z\ii\}$.
We denote by $C_{\id-1}$ and $C_{\id+b}$ the composition operators
$\ph(z) \mapsto \ph(z-1)$ and
$\ph(z) \mapsto \ph\big(z+b(z)\big)$,
acting in spaces of functions as well as in the space of formal series $\C[[z\ii]]$.
\end{Nota}


Suppose that $\gP = \gP^\pm_{R,\de}$ with $R\,$ large enough so that~$b$
is analytic on~$\gP$ and let $\Log$ be any branch of the logarithm
in~$\gP$.
Under the change of unknown function $v(z) = z + \rho\Log z + \ph(z)$,
the equation $v\circ f = v+1$ is transformed\footnote{
Notice that, for $\abs{z}$ large enough, $\frac{f(z)}{z}$ is close to~$1$,
hence
$\Log f(z) - \Log z = \log \frac{f(z)}{z}$ where $\log$ is the
principal branch of the logarithm.
} into
\beglabel{eqph}
C_{\id-1} \ph = C_{\id+b} \ph + b_1.
\elabel
%


\begin{Th}     \label{ThmResur}
Equation~\eqref{eqph} admits a unique formal solution of the form
$\ti\ph(z) = \sum_{n\geq0} c_n z^{-n-1}$.
Its formal Borel transform
$\hat\ph \defeq \gB\ti\ph = \sum_{n\geq0} c_n \frac{\ze^n}{n!}$
is $2\pi\I\Z$-resurgent, in the sense that it
converges for $\abs{\ze} < 2\pi$ and extends analytically along any
path issuing from~$0$ and staying in $\C\setminus 2\pi\I\Z$ except for
its origin.

Moreover, for such a path~$\ga$ with endpoint $\ze_* \in \I\R$ or
for $\ga=\{0\}$ and $\ze_*=0$,
and for $\de_0 \in (0,\frac{\pi}{2})$, there exist $C_0,R_0>0$ such that
the analytic continuation $\cont_\ga\hat\ph$ of~$\hat\ph$ along~$\ga$
(which is a holomorphic germ at~$\ze_*$)
satisfies
\beglabel{ineqleftright}
\abs*{ \cont_\ga\hat\ph\big( \ze_* + t\,\ee^{\I\th} \big) }
\leq
C_0\,\ee^{R_0 t}
\quad \text{for all $t\geq0$ and $\th \in I_{\de_0}^+\cup I_{\de_0}^-$,}
\elabel
where $I_{\de_0}^+ \defeq [-\de_0,\de_0]$ and $I_{\de_0}^-\defeq [\pi-\de_0,\pi+\de_0]$.
\end{Th}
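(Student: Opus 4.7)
The plan is first to establish the existence and uniqueness of the formal solution by a triangular recursion, and then to translate equation~\eqref{eqph} into the Borel plane, where the resurgence will emerge from a fixed-point argument in a suitably tailored Banach space of holomorphic functions with controlled exponential growth. For the formal step, I write $\ti\ph = \sum_{n\geq 0} c_n z^{-n-1}$ and expand $C_{\id-1}\ti\ph - C_{\id+b}\ti\ph$ formally in powers of~$z^{-1}$: since $b(z)=O(z^{-1})$, the leading contribution reduces to $-(1+b(z))\,\pa_z\ti\ph(z)$ plus terms involving higher derivatives, each carrying an additional factor of~$z^{-1}$. Matching coefficients of $z^{-n-2}$ with those of $b_1(z) = O(z^{-2})$ yields a recursion $(n+1)c_n = P_n(c_0,\ldots,c_{n-1})$, with polynomials $P_n$ depending on the coefficients of $b$ and $b_1$, proving existence and uniqueness.

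Next, I apply the formal Borel transform $\gB$ to~\eqref{eqph}. Translation by~$-1$ in~$z$ corresponds to multiplication by $e^\ze$ in~$\ze$, while $C_{\id+b}$ conjugates under $\gB$ to an operator $\Ka$ expressible as a convergent series of terms of the form $\tfrac{1}{k!}\hat b^{*k} * \bigl((-\ze)^k\,\cdot\bigr)$ arising from the Taylor expansion of $\ph(z+b(z))$ in powers of~$b$. Equation~\eqref{eqph} becomes $(e^\ze-1)\hat\ph = \hat b_1 + (\Ka - \ID)\hat\ph$. Since $e^\ze-1$ vanishes precisely on $2\pi\I\Z$, formal inversion produces the fixed-point equation
\[
\hat\ph = \frac{1}{e^\ze - 1}\bigl(\hat b_1 + (\Ka - \ID)\hat\ph\bigr),
\]
which is the mechanism responsible for $2\pi\I\Z$-resurgence.

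Next I set up a Banach space framework. For each small $\eps>0$, each $L>0$, and each domain $\Om$ which is an $\eps$-thickening of a subset of $\C\setminus 2\pi\I\Z$ adapted to the path~$\ga$ (in particular containing~$\ga$ together with both straight rays emanating from~$\ze_*$ in the directions $I_{\de_0}^\pm$), let $\RL(\Om)$ be the Banach space of holomorphic functions on~$\Om$ endowed with the weighted sup-norm $\sup_{\ze\in\Om}\abs{\hat f(\ze)}\,e^{-L\abs{\ze}}$. Since $\hat b$ and $\hat b_1$ are entire of exponential type and $(e^\ze-1)^{-1}$ is bounded on the $\eps$-thickening, the inhomogeneous term $(e^\ze-1)^{-1}\hat b_1$ lies in $\RL(\Om)$, and the operator $\hat\psi \mapsto (e^\ze-1)^{-1}(\Ka-\ID)\hat\psi$ is a bounded endomorphism of $\RL(\Om)$ whose norm can be made strictly less than~$1$ by choosing $L$ sufficiently large. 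A Neumann-series argument then furnishes a unique fixed point $\hat\ph_\Om \in \RL(\Om)$; by uniqueness of the formal solution, the $\hat\ph_\Om$ all agree in a neighbourhood of~$0$ with $\gB\ti\ph$ and therefore patch together into a single analytic continuation of $\hat\ph$ along arbitrary paths avoiding~$2\pi\I\Z$. The exponential bound~\eqref{ineqleftright} is then precisely the control built into the norm of $\RL(\Om)$.

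The main obstacle is the last step: building Banach spaces $\RL(\Om)$ simultaneously stable under convolution with $\hat b^{*k}$, under multiplication by $(-\ze)^k$, and under division by $e^\ze-1$, with norms admitting summable estimates on the series defining~$\Ka$, uniformly as $\Om$ is deformed along paths avoiding~$2\pi\I\Z$. The convolution estimates in particular require careful geometric bookkeeping of integration contours when $\Om$ winds around points of~$2\pi\I\Z$, and this is exactly where the specific design of the paper's resurgent spaces $\RL$ will come into play.
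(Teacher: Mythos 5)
Your overall architecture (formal solution, Borel transform of~\eqref{eqph} into $(\ee^\ze-1)\hat\ph = \hat b_1 + \hat B\hat\ph$, then a Neumann-series/fixed-point argument) coincides with the paper's. Your formal step is also fine, although the paper takes a slightly cleaner path: rather than matching coefficients, it rewrites the equation as $\ti\ph = EB\ti\ph + Eb_1$, where $E = (C_{\id-1}-\ID)\ii$ decreases the order by one and $B = C_{\id+b}-\ID$ increases it by at least two, so the Neumann series $\sum_{k\geq0}(EB)^k Eb_1$ converges formally and is the unique solution in $z\ii\C[[z\ii]]$ (Lemma~\ref{Lemformalphk}). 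The point of doing it this way is that exactly this decomposition $\hat\ph = \sum_k \hat\ph_k$ is what gets estimated in the Borel plane.

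The genuine gap is in your third step. You assert that $\hat\psi \mapsto \frac{1}{\ee^\ze-1}\hat B\hat\psi$ has norm $<1$ in a weighted sup-norm space $\RL(\Om)$ for large $L$, but you do not establish this, and you describe $\Om$ merely as ``an $\eps$-thickening of a subset of $\C\setminus 2\pi\I\Z$ adapted to the path $\ga$.'' This description is insufficient: for paths winding around points of $2\pi\I\Z$ the continuations live on a Riemann surface, the convolution integrals must be taken along the path itself, and the weight $\ee^{-L\abs{\ze}}$ is measured in $\abs{\ze}$ whereas the length of the contour can be much larger than $\abs{\ga(s)}$. You then defer the difficulty to ``the paper's resurgent spaces $\RL$''---but the paper does not introduce any such spaces. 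What the paper actually does (Lemma~\ref{Lemcontphk}) is to estimate the $k$-th iterate $\hat\ph_k = (\hat E\hat B)^k\hat\ph_0$ directly along a naturally parametrised path $\ga:[0,\ell]\to\C$ satisfying $\abs{\ga(s)}>\ka s$ and staying in a region where $\abs{\frac{\ze}{\ee^\ze-1}}\le M_0\ee^{-\la\abs{\ze}}$. Since $\hat B$ acts as a Volterra operator with entire kernel $K(\xi,\ze)$ of controlled exponential type, the induction produces
\[
\abs*{\cont_\ga\hat\ph_k(\ga(\ell))} \le C_1\frac{(M\ell)^k}{k!}\ee^{R\ell},
\]
where the factorial comes from iterating $\int_0^s\Psi(\sigma)\,\dd\sigma$ over nested simplices. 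This factorial decay makes the series converge regardless of whether the operator is a contraction, yields normal convergence on compacts (hence resurgence), and directly gives the exponential bound~\eqref{ineqleftright} after concatenating $\ga$ with a ray in direction $\th\in I_{\de_0}^\pm$ and choosing $\eps,\ka,\tau,m_0$ according to $\ga$ and $\de_0$. Until you carry out a comparable contraction estimate---including the geometric bookkeeping you yourself flag as the main obstacle---the decisive quantitative part of Theorem~\ref{ThmResur} remains unproved in your proposal.
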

\noindent
In the case $\ga=\{0\}$, inequality~\eqref{ineqleftright} says that
the formal series~$\ti\ph$ is Borel-Laplace $1$-summable in the
directions of~$I_{\de_0}^+$ and~$I_{\de_0}^-$. 
In fact, Theorem~\ref{ThmResur} says much more, since it provides a
Riemann surface to which~$\hat\ph$ extends, with exponential estimates
on all its sheets.
The rest of this section is devoted to the proof of
Theorem~\ref{ThmResur}.


The operator $C_{\id-1}-\ID$ maps $\zcz$ to $z^{-2}\C[[z\ii]]$
bijectively, we denote by~$E$ the inverse, which decreases the order
by one unit and has for Borel counterpart the multiplication operator
\[
\hat E \col \hat\psi(\ze) \in \ze\C[[\ze]] \mapsto
\frac{1}{\ee^\ze-1} \hat\psi(\ze) \in \C[[\ze]].
\]
On the other hand, $B \defeq C_{\id+b}-\ID =
\sum_{k\geq1} \frac{1}{k!} b^k \big(\frac{\dd}{\dd z}\big)^k$
(formally convergent series of operators)
increases the order by at least two units, so that the
composition~$EB$ maps $z^{-k-1}\C[[z\ii]]$ in
$z^{-k-2}\C[[z\ii]]$ for every $k\geq0$.


\begin{Lm}   \label{Lemformalphk}
Let $\ti\ph_k \defeq ( E B )^k E b_1 \in z^{-k-1} \C[[z\ii]]$ for
$k\geq0$.
Then the formally convergent series $\sum_{k\geq0}
\ti\ph_k$ is the unique solution of~\eqref{eqph} in $\zcz$.
\end{Lm}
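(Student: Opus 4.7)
The plan is to rewrite equation~\eqref{eqph} as a fixed-point equation in $\zcz$ and then exhibit the series $\sum_{k\geq 0}\ti\ph_k$ as the unique solution by iteration. Since $C_{\id-1}-\ID$ is a bijection from $\zcz$ onto $z^{-2}\C[[z\ii]]$ with inverse~$E$, and since $b_1\in z^{-2}\C[[z\ii]]$, the equation $C_{\id-1}\ph = C_{\id+b}\ph + b_1$ is equivalent, inside $\zcz$, to
\[
\ph = EB\ph + Eb_1,
\]
where I use $B = C_{\id+b}-\ID$ and the identity $(C_{\id-1}-\ID)\ph = B\ph + b_1$.

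Next I would use the order-shift already established before the lemma: $E b_1 \in z\ii\C[[z\ii]]$ and $EB$ sends $z^{-k-1}\C[[z\ii]]$ into $z^{-k-2}\C[[z\ii]]$. Hence by induction $\ti\ph_k = (EB)^k E b_1 \in z^{-k-1}\C[[z\ii]]$, which gives the displayed order statement and shows that the series $\sum_{k\geq 0}\ti\ph_k$ is formally convergent in $\C[[z\ii]]$: for each~$N$, only finitely many terms contribute to the coefficients of $z\ii,\dots,z^{-N}$. Call the sum~$\ti\ph$; it belongs to $\zcz$.

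To check that~$\ti\ph$ solves the fixed-point equation, I would split the sum as $\ti\ph = \ti\ph_0 + \sum_{k\geq 1}\ti\ph_k = Eb_1 + EB\big(\sum_{k\geq 0}\ti\ph_k\big) = Eb_1 + EB\ti\ph$, where the linearity of~$EB$ with respect to formally convergent sums is unproblematic because, at any prescribed order, only finitely many of the~$\ti\ph_k$ are involved. Applying $C_{\id-1}-\ID$ to both sides then recovers~\eqref{eqph}. For uniqueness, if $\ti\psi\in\zcz$ also satisfies~\eqref{eqph}, then $\ti\psi = Eb_1 + EB\ti\psi$, and iterating this identity $N$ times yields $\ti\psi = \sum_{k=0}^{N-1}\ti\ph_k + (EB)^N\ti\psi$; the remainder lies in $z^{-N-1}\C[[z\ii]]$, so $\ti\psi - \ti\ph$ vanishes to arbitrary order and is therefore zero.

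There is no real obstacle here: the whole argument is the standard Banach-style iteration in a filtered ring, and the work has already been done in the paragraph preceding the lemma where the order-shifting properties of~$E$ and~$B$ are recorded. The only point to handle carefully is the legitimacy of the rearrangement $\sum_{k\geq 0}\ti\ph_k = \ti\ph_0 + EB\sum_{k\geq 0}\ti\ph_k$, which is justified by the order estimate $(EB)^k Eb_1 \in z^{-k-1}\C[[z\ii]]$ making each coefficient of the formal sum a finite sum.
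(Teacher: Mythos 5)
Your proof is correct and follows exactly the same route as the paper: rewrite~\eqref{eqph} as the fixed-point equation $\ph = EB\ph + Eb_1$ and exploit the order-shift of~$EB$ to obtain formal convergence, existence and uniqueness of the iterated solution. You simply spell out the details that the paper compresses into ``whence the conclusion follows.''
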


\begin{proof}
For $\ti\ph \in \zcz$, \eqref{eqph} $\Leftrightarrow
(C_{\id-1}-\ID)\ti\ph = B\ti\ph + b_1 \Leftrightarrow
\ti\ph = EB\ti\ph + E b_1$,
whence the conclusion follows.
\end{proof}

\begin{figure}
\centering
\begin{subfigure}{0.35\textwidth}
\centering
\includegraphics[scale=0.17]{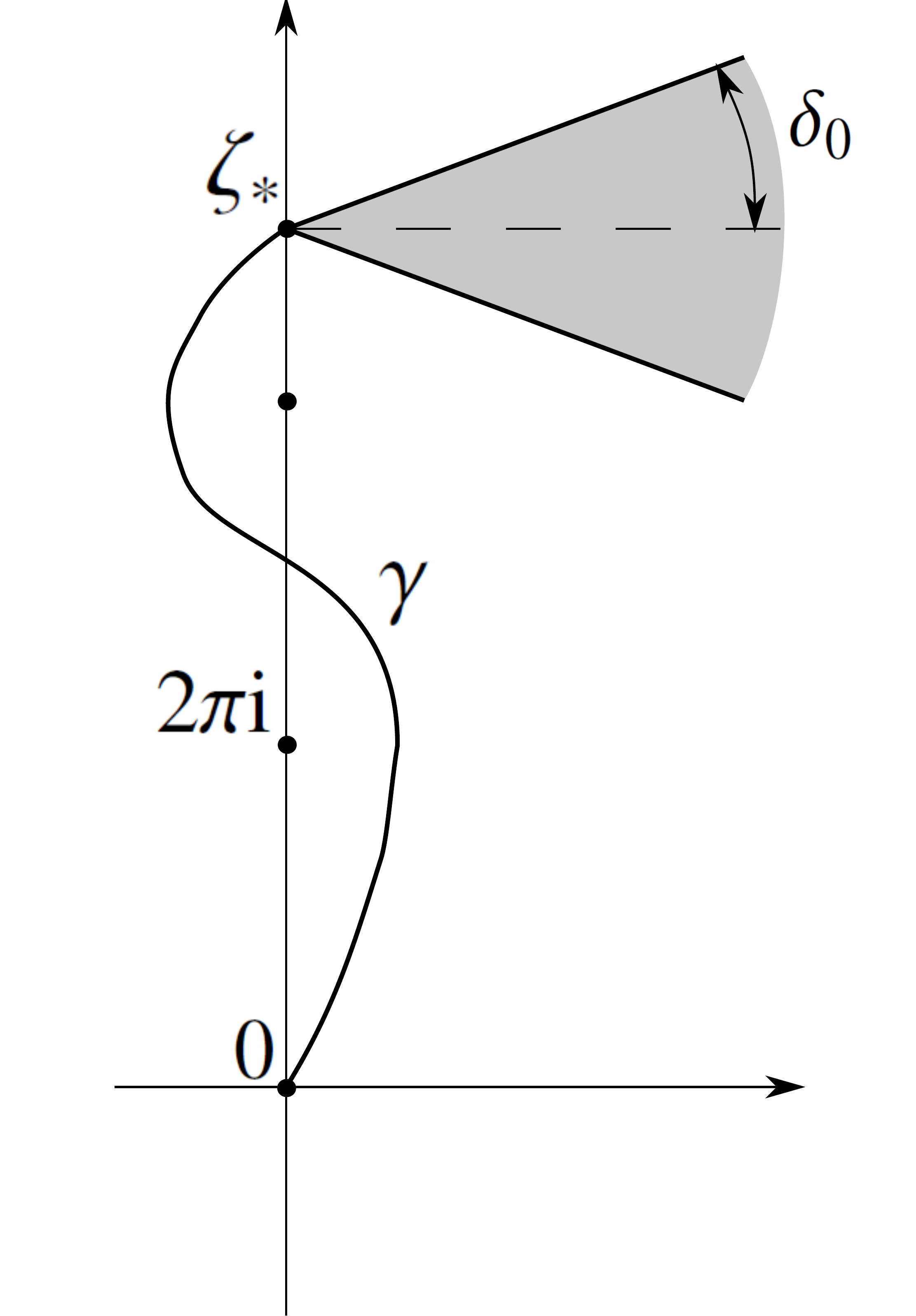}
\caption{Illustration of Theorem~\ref{ThmResur}.}
\end{subfigure}
\hspace{1cm}
\begin{subfigure}{0.35\textwidth}
\includegraphics[scale=0.10]{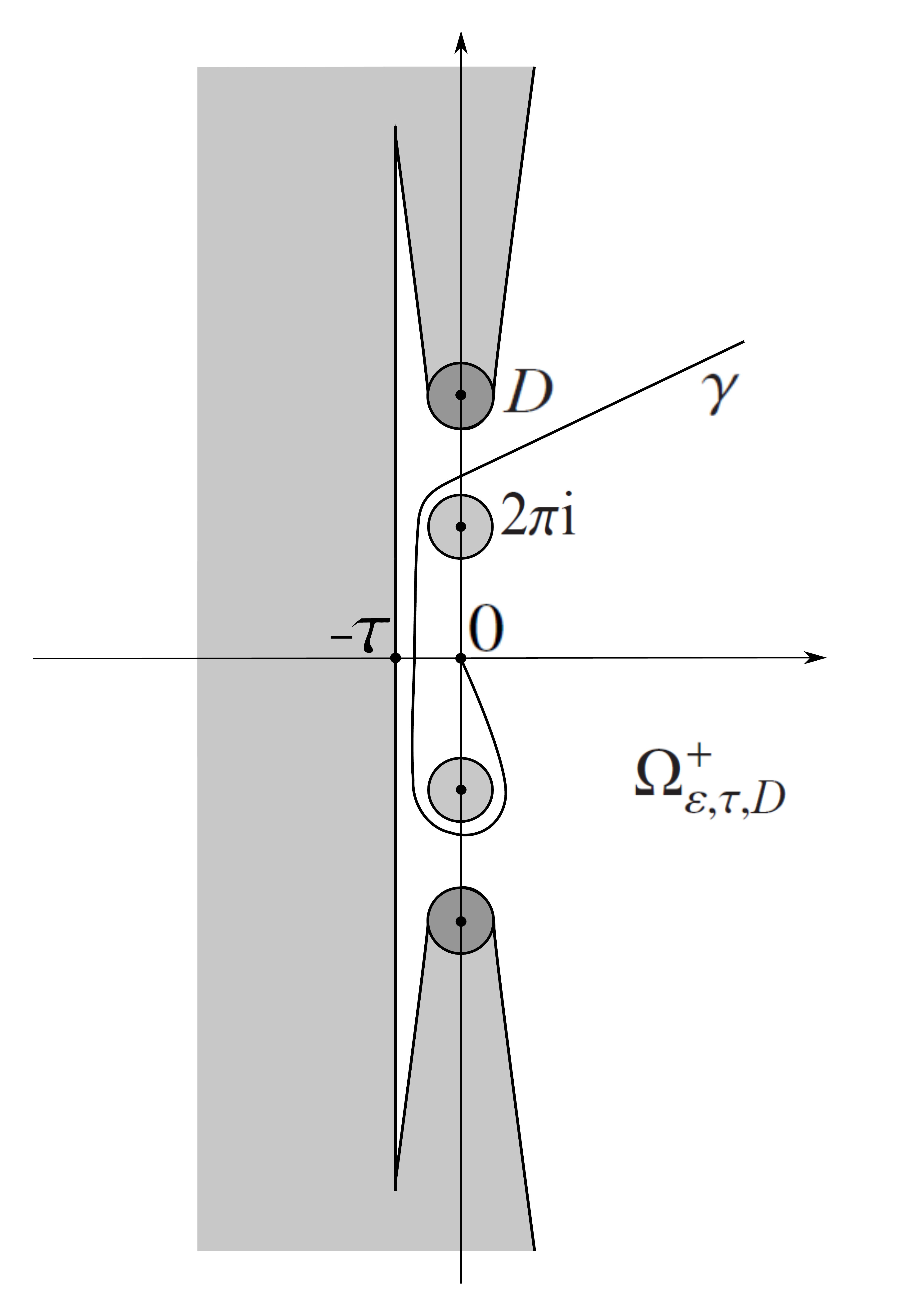}
\caption{Illustration of Lemma~\ref{Lemcontphk}.}
\end{subfigure}
\end{figure}

\begin{Lm}   \label{Lemcontphk}
For each $k\geq0$ the formal Borel transform $\hat\ph_k \defeq \gB\ti\ph_k$ is
$2\pi\I\Z$-resurgent.

Suppose $0 < \eps < \pi < \tau$, $0 < \ka \leq1$ and $D$ is a closed
disc of radius~$\eps$ centred at $2\pi\I m_0$ with $m_0 \in \Z^*$.
Then there exist $C_1,M,R>0$ such that,
for every piecewise $C^1$ path with natural parametrisation
$\ga \col [0,\ell] \to \C$ satisfying
%
%
\begin{gather}
\label{condeps}
s\in[0,\eps] \ens\Rightarrow\ens \abs{\ga(s)} = s,
\qquad
s>\eps \ens\Rightarrow\ens \abs{\ga(s)} > \eps,\\[1ex]
\label{condtauka}
\qquad
s\in[0,\ell] \quad\Rightarrow\quad
\abs{\ga(s)} > \ka s
\ens\text{and}\ens
\ga(s) \in \Om^+_{\eps,\tau,D},
\end{gather}
where
$\Om^+_{\eps,\tau,D} \defeq \{\, \ze\in\C \mid
\RE \ze > -\tau, \;
\dist\big( \ze, 2\pi\I\Z^* \big) > \eps \,\}
\setminus
\{\, u\ze \in\C \mid u\in[1,+\infty), \;
\pm\ze \in D \,\}$,
%
%
one has
\beglabel{ineqestimphk}
\abs*{ \cont_\ga\hat\ph_k\big( \ga(\ell) \big) }
\leq C_1\frac{ (M\ell)^k }{k!} \ee^{R\ell}
\quad \text{for all $k\geq0$}.
\elabel
\end{Lm}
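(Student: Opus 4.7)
The plan is to proceed by induction on~$k$, exploiting the recursion $\hat\ph_{k+1}=\hat E\hat B\hat\ph_k$, where $\hat E$ is multiplication by $(\ee^\ze-1)\ii$ and $\hat B$ is the Borel counterpart of $B=C_{\id+b}-\ID$. Since $b\in z\ii\C\{z\ii\}$ converges at infinity, $\hat b=\gB b$ is entire of exponential type $R_b$, and termwise Borel transformation of $B\ti g=\sum_{j\geq1}\frac{1}{j!}b^j\ti g^{(j)}$ yields
\[
\hat B\hat g(\ze)=\sum_{j\geq1}\frac{(-1)^j}{j!}\,\hat b^{*j}*\bigl(\eta^j\hat g(\eta)\bigr)(\ze),
\]
a convergent series of convolutions. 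On the other hand, $(\ee^\ze-1)\ii$ is holomorphic and uniformly bounded on $\Om^+_{\eps,\tau,D}$, which by construction stays at distance at least $\eps$ from every nonzero pole.

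The base case $k=0$ is direct: $\hat\ph_0=\hat b_1/(\ee^\ze-1)$, and since $b_1\in z^{-2}\C\{z\ii\}$, $\hat b_1$ is entire of exponential type and vanishes to order one at the origin, cancelling the pole of $(\ee^\ze-1)\ii$ there. Hence $\hat\ph_0$ is meromorphic on $\C$ with simple poles exactly on $2\pi\I\Z^*$, and satisfies a global exponential bound on $\Om^+_{\eps,\tau,D}$. For the inductive step, the central technical estimate is a convolution inequality of the form
\[
\abs*{\cont_\ga(\hat E\hat B\hat g)\bigl(\ga(s)\bigr)} \leq M\int_0^s \ee^{R_b(s-\sigma)}\,\abs{\cont_\ga\hat g\bigl(\ga(\sigma)\bigr)}\,d\sigma,
\]
obtained by deforming the integration contour of each convolution $\hat b^{*j}*(\eta^j\hat g)$ so that its inner variable follows $\ga|_{[0,s]}$, bounding $\hat b^{*j}$ via the exponential estimate on $\hat b$, and summing the $j$-series thanks to the $1/j!$ factors. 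Substituting the inductive hypothesis $|\cont_\ga\hat\ph_k(\ga(\sigma))|\leq C_1(M\sigma)^k/k!\cdot\ee^{R\sigma}$, computing $\int_0^s\sigma^k\,d\sigma = s^{k+1}/(k+1)$, and choosing $R\geq R_b$ large enough to absorb the two exponential factors into $\ee^{Rs}$, yields~\eqref{ineqestimphk} at level $k+1$.

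The main obstacle is justifying the deformation step: for every admissible path~$\ga$, one must verify that the segment $[0,\ga(s)]$ can be deformed into a tubular neighborhood of $\ga|_{[0,s]}$ without ever crossing the singular set $2\pi\I\Z^*$ of $\hat\ph_k$. This is precisely what dictates the geometry of $\Om^+_{\eps,\tau,D}$: the exclusion of the $\eps$-neighborhood of $2\pi\I\Z^*$ is manifest, $\RE\ze>-\tau$ prevents escape to the left, and the exclusion of the outward half-lines $\{u\ze\mid u\geq 1,\ \pm\ze\in D\}$ is exactly what prevents the deformed contour from being pinched between $\ga$ and the distinguished singularity $\pm 2\pi\I m_0$. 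The conditions~\eqref{condeps}--\eqref{condtauka} on~$\ga$ (radial start on $[0,\eps]$, and $|\ga(s)|>\ka s$ thereafter) then guarantee that such a tubular deformation exists with geometry controlled uniformly in~$\ga$, so that the constants $C_1,M,R$ in~\eqref{ineqestimphk} depend only on $\eps,\tau,\ka,D$ and not on the particular path.
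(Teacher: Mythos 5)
Your overall strategy matches the paper's: induction on $k$ via the recursion $\hat\ph_{k+1}=\hat E\hat B\hat\ph_k$, estimates for $\hat b^{*j}$ from the exponential type of $\hat b$, and a convolution inequality that produces the factorial gain $(M\ell)^k/k!$. But there are three points at which your sketch departs from what is actually needed, and one of them is a genuine gap.

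First, the paper's key structural step, which you do not use, is to resum the series defining $\hat B$ into a single integral operator: $\hat B\hat\psi(\ze)=\int_0^\ze K(\xi,\ze)\hat\psi(\xi)\,\dd\xi$ with $K(\xi,\ze)=\sum_{k\geq1}\frac{(-\xi)^k}{k!}\hat b^{*k}(\ze-\xi)$, an \emph{entire} function of $(\xi,\ze)\in\C^2$. Once this is done, for any path $\ga$ along which $\hat\psi$ extends analytically one has the explicit formula $\cont_\ga\hat B\hat\psi(\ga(s))=\int_0^s K(\ga(\sig),\ga(s))\,\cont_\ga\hat\psi(\ga(\sig))\,\ga'(\sig)\,\dd\sig$, and the analytic continuation of $\hat B\hat\psi$ along $\ga$ is immediate: there is nothing to ``deform,'' because $K$ has no singularities at all and the integration variable simply follows $\ga$, which stays away from $2\pi\I\Z^*$ by hypothesis. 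Your ``main obstacle'' paragraph, about deforming $[0,\ga(s)]$ into a tubular neighbourhood of $\ga$ without crossing $2\pi\I\Z^*$, is attacking a problem that the kernel formulation dissolves. This also corrects your reading of the geometry of $\Om^+_{\eps,\tau,D}$: the exclusion of the outward half-lines through $\pm D$ is not there to prevent any pinching of contours---it is exactly what makes the vertical strip $\{-\tau<\RE\ze\}\cap\Om^+$ bounded in the imaginary direction, so that one can choose $M_0,\la>0$ with $\abs{\ze/(\ee^\ze-1)}\leq M_0\,\ee^{-\la\abs{\ze}}$ on $\Om^+_{\eps,\tau,D}$.

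Second, and this is the real gap, your claim that ``$(\ee^\ze-1)^{-1}$ is holomorphic and uniformly bounded on $\Om^+_{\eps,\tau,D}$'' is false: $\Om^+$ contains a neighbourhood of the origin, where $(\ee^\ze-1)^{-1}$ has a simple pole. In the base case you notice that $\hat b_1$ vanishes at $0$ and so $\hat\ph_0$ is fine, but in your inductive inequality the same issue shows up in a way you have not addressed: the factor you call $M$ cannot come from a uniform bound on $\hat E$. The paper resolves this by bounding $\abs{\ze/(\ee^\ze-1)}$ (rather than $(\ee^\ze-1)^{-1}$) and observing that the $1/\abs{\ga(s)}$ loss this costs is exactly compensated by the factor $\abs{\xi}=\abs{\ga(\sig)}$ that appears in the kernel estimate $\abs{K(\xi,\ze)}\leq C\abs{\xi}\,\ee^{\mu\abs{\xi}+(\frac{C}{\mu}+\be)\abs{\ze-\xi}}$, precisely because $K$ starts at $k=1$. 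The hypothesis $\abs{\ga(s)}>\ka s$ then turns $1/\abs{\ga(s)}$ into the controllable $1/(\ka s)$, and the ratio $\sig/s\leq1$ inside the integral is what keeps everything finite near $s=0$. Your inequality $\abs{\cont_\ga(\hat E\hat B\hat g)(\ga(s))}\leq M\int_0^s\ee^{R_b(s-\sig)}\abs{\cont_\ga\hat g(\ga(\sig))}\,\dd\sig$ would close the induction arithmetically if it were true with a constant $M$, but you have not produced such an $M$---the pole at the origin is the obstruction, and handling it is the main technical content of the proof.

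Third, a minor but telling symptom of the first two issues: you write of summing the $j$-series ``thanks to the $1/j!$ factors'' after deforming each convolution separately; but unless you first package the series as the kernel $K$, you would need to justify termwise analytic continuation of an infinite sum of convolutions and then show the sum still converges after continuation. The paper's formulation proves holomorphy of $K$ on $\C\times\C$ once and for all from \eqref{ineqbconvk}, and the whole issue disappears.
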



\begin{proof}
Choose $M_0, \la >0$ so that
$\abs*{ \frac{\ze}{\ee^\ze-1} } \leq M_0 \, \ee^{-\la \abs{\ze}}$
for $\ze \in \Om^+_{\eps,\tau,D} $.
%
%
%
%
%
We have $\hat b\defeq\gB b$, $\hat b_1 \defeq \gB b_1$ entire
functions, with
\[
\abs{\hat b(\ze)} \leq C \,\ee^{\be\abs{\ze}}
\ens\text{and}\ens
\abs{\hat b_1(\ze)} \leq C \abs{\ze} \,\ee^{\be\abs{\ze}}
\quad\text{for all $\ze\in\C$,}
\]
for some $C,\be>0$.
The formal Borel transform maps $b^k(z)$ to $\hat b^{*k}(\ze)$, the
$k$-th convolution\footnote{
The formal Borel transform~$\gB$ maps the Cauchy product of formal
series to the convolution product defined by
$\hat\phi*\hat\psi(\ze) = \int_0^\ze \hat\phi(\ze-\xi)\hat\psi(\xi)\,\dd\xi$,
with termwise integration for formal series
$\hat\phi, \hat\psi \in \C[[\ze]]$, and with the
obvious analytical meaning when $\hat\phi, \hat\psi \in \C\{\ze\}$ (then
taking~$\ze$ close enough to~$0$).
%
%
}
power of $\hat b(\ze)$, which is also entire and
for which we get
\beglabel{ineqbconvk}
\abs{\hat b^{*k}(\ze)} \leq C^k \frac{\abs{\ze}^{k-1}}{(k-1)!} \ee^{\be\abs{\ze}}
\quad\text{for all $\ze\in\C$.}
\elabel
The Borel counterpart of $C_{\id+b}-\ID$ is
$\hat B \col \hat\psi \in \C[[\ze]] \mapsto
\sum_{k\geq1} \frac{1}{k!} \hat b^{*k} * ( (-\ze)^k \hat\psi) \in \C[[\ze]]$,
which leaves invariant the subspace $\C\{\ze\}$ and induces on it an integral
transform
\[
\hat B\hat\psi(\ze) = \int_0^\ze K(\xi,\ze) \hat\psi(\xi) \,\dd\xi
\quad\text{for $\ze$ close to~$0$,}
\]
with kernel function
$K(\xi,\ze) = \sum_{k\geq1} \frac{(-\xi)^k}{k!} \hat b^{*k}(\ze-\xi)$.
The point is that this kernel is holomorphic in $\C\times\C$, with
estimates following from~\eqref{ineqbconvk}:
\[
\abs{K(\xi,\ze)} \leq C \abs{\xi} \, \ee^{\mu\abs{\xi} + (\frac{C}{\mu}+\be)\abs{\ze-\xi}}
\quad\text{for all $(\xi,\ze)\in\C\times\C$,}
\]
with any $\mu>0$.
Therefore, whenever a germ $\hat\psi\in\C\{\ze\}$ admits
analytic continuation along a naturally parametrised path $\ga \col [0,\ell] \to \C$, this is
also true for $\hat B\hat\psi$, with
$\cont_\ga \hat B\hat\psi\big( \ga(s) \big) =
\int_0^s K\big( \ga(\sig),\ga(s)\big) \cont_\ga \hat\psi\big(\ga(\sig) \big)
\ga'(\sig)\,\dd\sig$
for all $s\in[0,\ell]$.
Hence $2\pi\I\Z$-resurgence is preserved by the composition $\hat
E\hat B$ and we can get estimates.


We start with
$\hat\ph_0 = \hat E\hat b_1 = \frac{\hat b_1(\ze)}{\ee^\ze-1}$,
which is meromorphic and clearly $2\pi\I\Z$-resurgent,
and satisfies~\eqref{ineqestimphk} with $C_1 \defeq M_0 C$ and with any $R\geq\be$.
We choose $\mu \defeq \la\ka$, $R\defeq \frac{C}{\mu} + \be$ and
$M \defeq \frac{M_0 C}{\ka}$.
We obtain the desired result for $\hat\ph_k = (\hat E\hat B)^k \hat\ph_0$ by induction,
observing that if a path~$\ga$ satisfies~\eqref{condeps}--\eqref{condtauka}
and a $2\pi\I\Z$-resurgent function~$\hat\psi$ satisfies
$\abs*{\cont_\ga \hat\psi\big( \ga(s) \big)} \le \ee^{R s} \Psi(s)$ for
all $s\in[0,\ell]$ then,
for all $0\le\sig\le s\le \ell$,
$\abs{K(\ga(\sig),\ga(s))} \le C\sig \ee^{\mu\sig+R(s-\sig)}$ 
and 
$\abs*{ \frac{1}{\ee^{\ga(s)}-1} } \leq \frac{M_0}{\ka s} \, \ee^{-\mu s}$, whence
\[
\abs*{\cont_\ga \hat E\hat B\hat\psi\big( \ga(s) \big)} \le
\tfrac{C M_0}{\ka} \, \ee^{Rs} \int_0^s \tfrac{\sig}{s} 
\Psi(\sig) \, \ee^{-\mu(s-\sig)}\,\dd\sig \le
M\,\ee^{R s} \int_0^s \Psi(\sig)\,\dd\sig
\quad \text{for all $s\in[0,\ell]$.}
\]
\vskip -12pt
\end{proof}
\vspace{1ex}

We can deduce that the series of holomorphic functions
$\sum_{k\geq0} \hat\ph_k$ converges normally in any compact subset of
the disc $\big\{ \abs{\ze} < 2\pi \big\}$
(using paths~$\ga$ of the form $[0,\ze]$)
and that its sum, which is~$\hat\ph$, extends analytically along any
naturally parametrised path $\ga$ which starts as the line segment
$[0,1]$ and then stays in $\C\setminus 2\pi\I\Z$:
indeed, taking $\eps,\ka$ small enough and $\tau, m_0$ large enough,
we see that Lemma~\ref{Lemcontphk} applies to~$\ga$ and the neighbouring paths,
so that \eqref{ineqestimphk} yields the normal convergence of
$\sum_{k\geq0} \cont_\ga\hat\ph_k(\ga(t)+\ze) = \cont_\ga\hat\ph(\ga(t)+\ze)$
for all~$t$ and~$\ze$ with $\abs{\ze}$ small enough.
Therefore $\hat\ph$ is $2\pi\I\Z$-resurgent.


We also get the part of~\eqref{ineqleftright} relative to~$I_{\de_0}^+$:
given $\de_0\in(0,\pi/2)$ and~$\ga$ with endpoint $\ze_*\in\I\R$, we first
replace an initial portion of~$\ga$ with a line segment of length~$1$
(unless~$\ga$ stays in the unit disc, in which case the modification
of the arguments which follow is trivial)
and switch to its natural parametrisation
$\ga \col [0,\ell] \to \C$.
We then choose
\begin{align*}
\eps & < \min\bigg\{ 1, \, \min_{[1,\ell]}\abs{\ga}, \,
\dist\big( \ga\big([0,\ell]\big), 2\pi\I\Z^* \big), \,
\dist\big(\ze_*,2\pi\I\Z\big)\cos\de_0 \bigg\}, \\[1ex]
\ka & < \min\bigg\{
\min_{[0,\ell]} \tfrac{\abs{\ga(s)}}{s}, \,
\min_{t\geq0} \tfrac{\abs{ \ze_* + t\,\ee^{\pm\I\de_0} }}{\ell+t}
\bigg\},
\end{align*}
$\tau > -\min\RE\ga$ and $m_0 > \tfrac{1}{2\pi}(\eps + \max
\abs{\IM\ga})$,
so that, for each $t\geq0$ and $\th\in I^+_{\de_0}$,
Lemma~\ref{Lemcontphk} applies to the concatenation $\Ga
\defeq \ga + [\ze_*,\ze_*+t\,\ee^{\I\th}]$;
since $\Ga$ has length $\ell+t$,
\eqref{ineqestimphk} yields
$ \abs*{ \cont_\ga\hat\ph\big( \ze_* + t\,\ee^{\I\th} \big) }
= \abs*{ \cont_\Ga\hat\ph\big( \Ga(\ell+t) \big) } \leq
C_1\,\ee^{(M+R)(\ell+t)} $.


The part of~\eqref{ineqleftright} relative to~$I_{\de_0}^-$ follows
from the fact that $\hat\ph^-(\ze) \defeq \hat\ph(-\ze)$ satisfies all
the properties we just obtained for $\hat\ph(\ze)$, since
it is the formal Borel transform of
$\ti\ph^-(z) \defeq -\ti\ph(-z)$
which solves the equation
$C_{\id-1} \ti\ph^- = C_{\id+b^-} \ti\ph^- + b^-_*$
associated with the simple parabolic germ $f^-(z) \defeq -f\ii(-z) = z+1+b^-(z+1)$.

The proof of Theorem~\ref{ThmResur} is now complete.


\begin{Rem}
The name ``resurgent'',
whose definition in a particular case is given in the statement of Theorem~\ref{ThmResur},
was chosen by \'Ecalle because of the explicit connection he obtained
between the germ~$\hat\ph$ at~$0$ and the singularities of its
analytic continuation at any point of $2\pi\I\Z$---see \cite{DSdeux}.
Theorem~\ref{ThmResur} is stated in \cite{Eca81}, with a detailed
proof only for the case $\rho=0$.
The above proof is original and simpler.
\end{Rem}


\begin{Rem}
For a path~$\ga$ ending at a point~$\ze$, 
we get the explicit formula
\[
\cont_\ga \hat\ph(\ze) = \frac{1}{\ee^\ze-1} \Big( \hat b_1(\ze) +
\sum_{k\ge1} \int_{\De_{\ga,k}} \hat b_1(\xi_1) \frac{
K(\xi_1,\xi_2)\cdots K(\xi_{k-1},\xi_k)K(\xi_k,\ze)
}{
(\ee^{\xi_1}-1) \cdots (\ee^{\xi_{k-1}}-1)(\ee^{\xi_k}-1)
}
\, \dd\xi_1 \wedge \cdots \wedge \dd\xi_k
\Big),
\]
with the notation
$\De_{\ga,k} \defeq \big\{\, \big( \ga(s_1),\ldots,\ga(s_k) \big) 
\mid s_1 \le \cdots \le s_k \,\big\}$ for each positive integer~$k$.
\end{Rem}

\section{Fatou coordinates and lifted horn maps}  \label{secFatouCoord}

%
For $\ga=\{0\}$ and any $\de_0 \in (0,{\pi}/{2})$, Theorem~\ref{ThmResur} provides $R_0 = R_0(\de_0)$
and we may apply the Laplace transform:
\[
\cL^\th \hat\ph(z) \defeq \int_0^{\ee^{\I\th}\infty} \ee^{-z\ze}
\hat\ph(\ze)\,\dd\ze,
\qquad \th \in I_{\de_0}^+ \cup I_{\de_0}^-,
\]
defining $\cL^\th \hat\ph$ holomorphic in
$\Pi_{R_0(\de_0)}^\th \defeq \{\, z\in\C \mid \RE(z\,\ee^{\I\th}) > R_0(\de_0)\,\}$.
The functions $\cL^\th\hat\ph$ with $\th\in I_{\de_0}^+$
are extension of each other, similarly for $I_{\de_0}^-$,
so we get two Borel-Laplace sums
\[
\cL^+\hat\ph \ens\text{in}\;
U^+ \defeq \bigcup_{\de_0\in(0,\frac{\pi}{2})} \bigcup_{\th\in I_{\de_0}^+} \Pi_{R_0(\de_0)}^\th,
\qquad \cL^-\hat\ph \ens\text{in}\;
U^- \defeq \bigcup_{\de_0\in(0,\frac{\pi}{2})} \bigcup_{\th\in I_{\de_0}^-} \Pi_{R_0(\de_0)}^\th.
\]
Since $U^\pm \subset \C\setminus\R^\mp$, we choose branches of the logarithm as follows:
\[
\Log^+ z = \int_1^z \frac{\dd u}{u}
\ens \text{for $z \in \C\setminus\R^-$},
\qquad
\Log^- z = \I\pi + \int_{-1}^z \frac{\dd u}{u}
\ens \text{for $z \in \C\setminus\R^+$}.
\]

\begin{wrapfigure}{r}{0.34\textwidth}
\vspace{-0.55cm}
\hspace{0.4cm}
\includegraphics[width=0.29\textwidth]{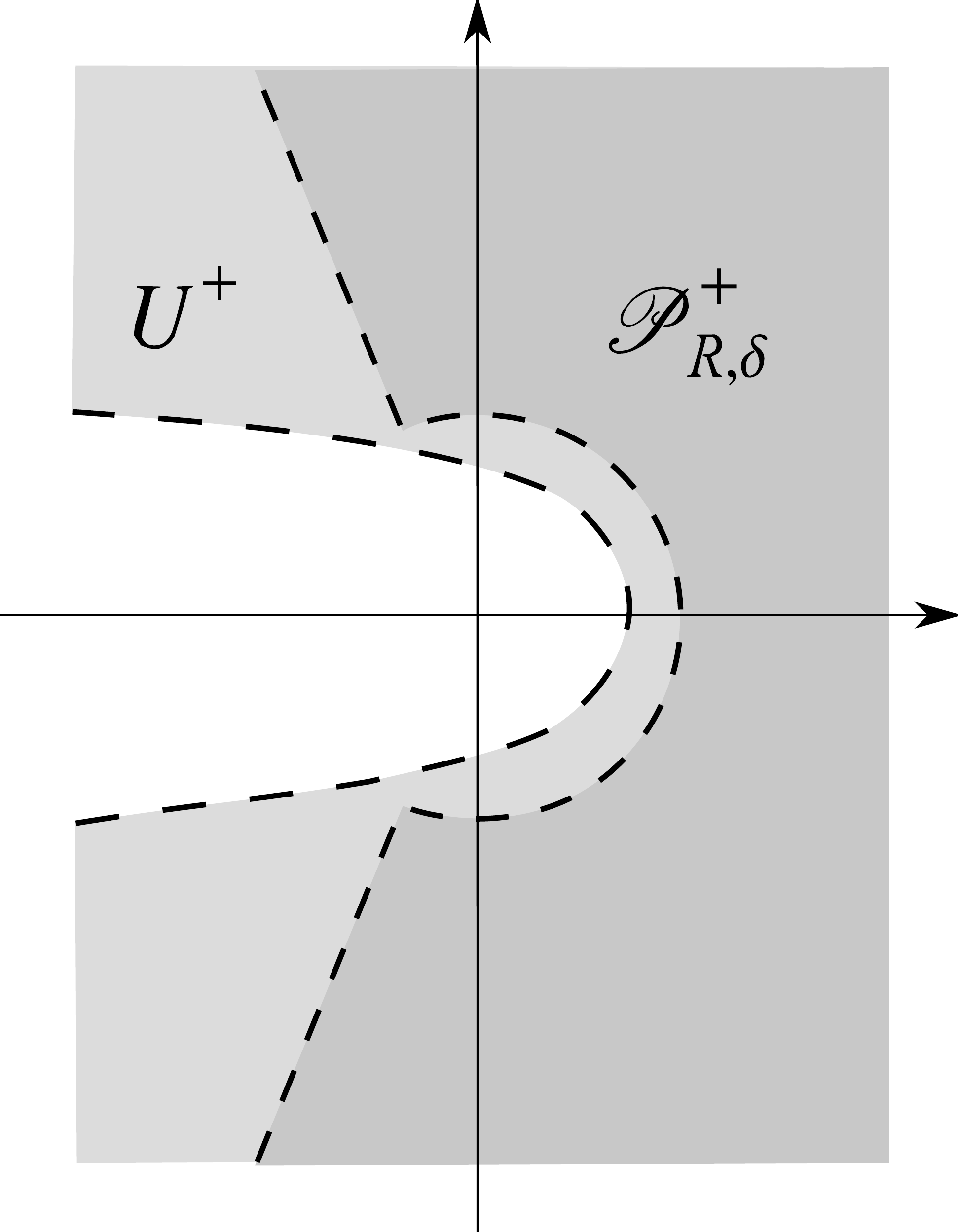}
\vspace{-0.15cm}
\caption*{(c) Illustration of Theorem \ref{CorFatou}.}
\vspace{-0.8cm}
\end{wrapfigure}
\begin{Th}       \label{CorFatou}
The formulas
\beglabel{eqdefvpm}
v_*^\pm(z) \defeq z + \rho\Log^\pm z + \cL^\pm\hat\ph(z),
\qquad z \in U^\pm,
\elabel
define a pair $(v_*^+,v_*^-)$ of Fatou coordinates at~$\infty$ for the
simple parabolic germ~$f$.
For any $0<\de'<\de<\pi/2$, provided that $R$ is large enough,
the function $\cL^\pm\hat\ph$ has uniform $1$-Gevrey asymptotic
expansion~$\ti\ph$ in~$\gP^\pm_{R,\de}$
and the function~$v_*^\pm$ is univalent in~$\gP^\pm_{R,\de}$ with its image
containing a set of the form $\gP^\pm_{R',\de'}$.

Any other pair of Fatou coordinates is of the form $(v_*^++c^+,v_*^-+c^-)$
with arbitrary complex constants $c^+,c^-$.
\end{Th}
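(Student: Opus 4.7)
The plan is to translate the resurgence and summability contained in Theorem~\ref{ThmResur} into analytic statements about~$v_*^\pm$ via the standard Borel--Laplace dictionary, and then extract univalence, the image description, and uniqueness as consequences. The most delicate point will be verifying that the Borel--Laplace sums $\cL^\pm\hat\ph$, and not merely the formal series~$\ti\ph$, actually satisfy the transformed equation~\eqref{eqph}.

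First, I would establish the $1$-Gevrey asymptotic expansion. Inequality~\eqref{ineqleftright} with $\ga=\{0\}$ is precisely the statement that $\hat\ph$ is $1$-summable in every direction of $I_{\de_0}^+\cup I_{\de_0}^-$. By the classical Watson-type lemma for sectorial Laplace integrals, each $\cL^\th\hat\ph$ is holomorphic on $\Pi_{R_0(\de_0)}^\th$ and admits~$\ti\ph$ as uniform $1$-Gevrey asymptotic expansion on proper subsectors. Gluing as~$\th$ ranges over $I_{\de_0}^\pm$ and $\de_0\uparrow\pi/2$ yields $\cL^\pm\hat\ph$ with the announced asymptotic on~$U^\pm$; for given $0<\de<\pi/2$ the sector $\gP^\pm_{R,\de}$ sits inside~$U^\pm$ once $R$ is large enough. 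In particular, $v_*^\pm(z)=z+\rho\Log^\pm z+O(\abs{z}\ii)$ and $(v_*^\pm)'(z)=1+O(\abs{z}\ii)$ uniformly in $\gP^\pm_{R,\de}$.

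Second, I would verify the conjugation equation. Under the substitution used to derive~\eqref{eqph}, the identity $v_*^\pm\circ f=v_*^\pm+1$ becomes the statement that $\cL^\pm\hat\ph$ solves~\eqref{eqph} analytically. Enlarging~$R$ so that $z+b(z)$ maps $\gP^\pm_{R,\de}$ back into $U^\pm$, both members $C_{\id-1}\cL^\pm\hat\ph$ and $C_{\id+b}\cL^\pm\hat\ph+b_1$ are holomorphic on that sector. They share the common $1$-Gevrey asymptotic expansion $C_{\id-1}\ti\ph=C_{\id+b}\ti\ph+b_1$, stability of the $1$-Gevrey class under composition with~$\id+b$ being routine since $b\in z\ii\C\{z\ii\}$ tends to~$0$ at~$\infty$. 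As the opening of $\gP^\pm_{R,\de}$ exceeds~$\pi$, Watson's theorem on uniqueness of $1$-summable functions forces the two holomorphic functions to coincide, which is~\eqref{eqph} for $\cL^\pm\hat\ph$ and hence the Fatou equation.

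Third, univalence and image. Integrating $(v_*^\pm)'=1+O(\abs{z}\ii)$ along the segment $[z_1,z_2]\subset\gP^\pm_{R,\de}$ gives $v_*^\pm(z_1)-v_*^\pm(z_2)=(z_1-z_2)(1+o(1))$, whence injectivity for~$R$ large. For the image, given $\de'<\de$ and~$R'$ large, the asymptotic $v_*^\pm(z)-z\to0$ makes the $v_*^\pm$-image of $\pa\gP^\pm_{R,\de}$ stay outside $\gP^\pm_{R',\de'}$; an argument-principle count then produces exactly one preimage in $\gP^\pm_{R,\de}$ for each target point in $\gP^\pm_{R',\de'}$. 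Finally, for uniqueness of the pair up to constants: given any other pair $(v^+,v^-)$, set $h^\pm\defeq v^\pm-v_*^\pm$; this is $f$-invariant, so it factors through $v_*^\pm$ as $H^\pm\circ v_*^\pm$ with $H^\pm(w+1)=H^\pm(w)$ holomorphic on a half-plane-like region, and periodicity extends $H^\pm$ to an entire function. Boundedness of $H^\pm$ reduces to showing that any Fatou coordinate differs from $z+\rho\Log^\pm z$ by a bounded quantity: iterating $v\circ f=v+1$ extracts the leading behaviour, and uniqueness of the formal solution of~\eqref{eqph} in Lemma~\ref{Lemformalphk} pins it down up to a constant. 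Liouville then yields $H^\pm$ constant, completing the proof.
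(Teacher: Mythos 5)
Your overall strategy is sound and reaches the same destination, but you depart from the paper's route in two places, and in the second one a genuine gap remains.

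For the verification that $\cL^\pm\hat\ph$ solves~\eqref{eqph}, you appeal to uniqueness of $1$-Gevrey asymptotic representatives on a sector of opening $>\pi$ (Watson/Nevanlinna), arguing that the two sides of~\eqref{eqph} applied to $\cL^\pm\hat\ph$ share the asymptotic expansion $C_{\id-1}\ti\ph=C_{\id+b}\ti\ph+b_1$. This works, but it requires knowing that the $1$-Gevrey class is stable under $C_{\id+b}$, which you state is routine; it is a standard but not completely trivial fact. The paper takes a more direct route, computing the Laplace transform of the Borel-plane identity $\ee^\ze\hat\ph=\hat\ph+\hat B\hat\ph+\hat b_1$ term by term and invoking the convolution estimate~\eqref{ineqbconvk} to justify exchanging $\cL^\pm$ with the sum defining $\hat B$. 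The paper's approach is slightly more self-contained since it only uses what was already proved; yours is conceptually clean but outsources the key step to a result about stability of Gevrey asymptotics under analytic substitutions.

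Your uniqueness argument has a real gap. You factor $h^\pm:=v^\pm-v_*^\pm$ through $v_*^\pm$ to get a $1$-periodic $H^\pm$ that extends to an entire function, and then you want to apply Liouville, which requires boundedness of $H^\pm$. You reduce this to the claim that \emph{any} Fatou coordinate differs from $z+\rho\Log^\pm z$ by a bounded quantity, and you say this follows by iterating $v\circ f=v+1$ and invoking Lemma~\ref{Lemformalphk}. But Lemma~\ref{Lemformalphk} concerns only the uniqueness of the \emph{formal} solution of~\eqref{eqph} in $z\ii\C[[z\ii]]$; it gives no information about the asymptotic behaviour of an arbitrary univalent solution of~\eqref{eqvpm}, which a priori need not even admit an asymptotic expansion. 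The Fatou-style iteration argument ($v(f^n(z))=v(z)+n$ combined with the asymptotics of $f^n$) \emph{can} be pushed through, but it requires work you have not supplied. The paper sidesteps boundedness entirely: since both $v^+$ and $v_*^+$ are required to be \emph{univalent}, the map $v^+\circ(v_*^+)\ii=\id+P$ is univalent; by $1$-periodicity of~$P$ it extends to a univalent entire function, which must be affine, forcing~$P$ to be constant. If you replace Liouville by this univalence observation, your uniqueness argument closes without any boundedness input.

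Your treatment of univalence and of the image of $v_*^\pm$ via an argument-principle count is fine and somewhat more explicit than the paper's, which contents itself with the asymptotics $\abs{v_*^\pm(z)/z}\to1$ and $v_*^\pm(z)/\abs{v_*^\pm(z)}\sim z/\abs z$.
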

\noindent
  (For the definition and properties of $1$-Gevrey asymptotic expansion, see
  \cite{Mal_cours} or \cite{Ram}.)


\begin{proof}
%
%
%
Since~$b_1$ is convergent, it coincides with $\cL^+\hat b_1$ on~$U^+$,
similarly $b_{|U^+} = \cL^+\hat b$.
We have $\ee^\ze\hat\ph = \hat\ph+\hat B\hat\ph + \hat b_1$
and $\cL^+\big(\ee^\ze\hat\ph\big)(z) = (\cL^+\hat\ph)(z-1)$.
The Laplace transform maps the convolution product
to the product of functions and the operator of multiplication
by~$-\ze$ to the differentiation~$\frac{\dd}{\dd z}$, 
hence its action on each term of
$\hat\ph + \hat B\hat\ph = \hat\ph + 
\sum_{k\geq1} \frac{1}{k!} \hat b^{*k} * ( (-\ze)^k \hat\ph )$ 
is clear and~\eqref{ineqbconvk} allows us to exchange~$\cL^+$ and~$\sum$:
\[
\cL^+(\hat\ph+\hat B\hat\ph) =
\sum_{k\geq0} \frac{1}{k!}b^k \big(\tfrac{\dd}{\dd z}\big)^k\cL^+\hat\ph =
\cL^+\hat\ph \circ (\id + b).
\]
It follows that $\cL^+\hat\ph$ is a solution of~\eqref{eqph}, and
hence that~$v_*^+$ satisfies~\eqref{eqvpm}.

Observe that $\gP^+_{R,\de} \subset U^+$ as soon as there
exists~$\de_0$ such that $\de < \de_0$ and
$R \geq R_0(\de_0) / \sin(\de_0-\de)$.
The $1$-Gevrey asymptotic property is standard in Borel-Laplace
summation. It implies $\cL^+\hat\ph = O(z\ii)$, $\frac{\dd}{\dd z}(\cL^+\hat\ph)
= O(z^{-2})$,
and also the univalence of~$v_*^+$ in $\gP^+_{R,\de}$ for $R$ large
enough and the statement on its image,
because $\abs*{\frac{v_*^+(z)}{z}} \xrightarrow{}1$
and $\frac{v_*^+(z)}{\abs*{v_*^+(z)}} \sim \frac{z}{\abs*{z}}$.

For any univalent solution~$v^+$ of~\eqref{eqvpm} on~$\gP^+_{R,\de}$,
the univalent function $v^+\circ (v_*^+)\ii \col \gP^+_{R',\de'} \to \C$
conjugates $\id+1$ with itself, hence is of the form $\id+P$ with a
$1$-periodic~$P$ and extends univalently to~$\C$,
hence~$P$ is constant. For $v_*^-$ the proof is similar.
\end{proof}


\begin{Rem}
The first use of Borel-Laplace summation for obtaining Fatou coordinates is in \cite[\S 9c]{Eca81}.
The asymptotic property without the Gevrey qualification can be found
in earlier works by G.~Birkhoff, G.~Szekeres, T.~Kimura and
J.~\'Ecalle---see \cite{Loray};
see \cite{LY} for a recent independent proof and an application to
numerical computations.
\end{Rem}


For any pair of Fatou coordinates $(v^+,v^-)$, we can take $\de'<\de$
and large $R,R'$ so that $\gP^+_{R,\de} \cap
(v^-)\ii(\gP^-_{R',\de'})$ has upper and lower connected components
$\gQ\up$ and~$\gQ\low$.
The \emph{lifted horn maps} are $h\up \defeq v^+ \circ (v^-)\ii_{| \gQ\up}$,
which conjugates $\id+1$ with itself, hence is of the form $\id+P\up$
with a $1$-periodic~$P\up$ and extends to an upper half-plane,
and $h\low \defeq v^+ \circ (v^-)\ii_{| \gQ\low}$ with similar
properties in a lower half-plane.
For the normalized pair $(v_*^+,v_*^-)$ we get
$h_*\uplow(Z) = Z + \rho(\Log^+ Z - \Log^- Z) + o(1)$,
where $\Log^+-\Log^- \equiv 0$ in~$\gQ\up$ and $\equiv-2\pi\I$
in~$\gQ\low$,
which puts a constraint on the form of the Fourier expansion of $h_*\uplow-\id$:
\beglabel{eqFourierHorn}
h_*\up(Z) - Z = \sum_{m\geq1} A_m \ee^{2\pi\I m Z},
\qquad
h_*\low(Z) - Z = -2\pi\I\rho + \sum_{m\geq1} A_{-m} \ee^{-2\pi\I m Z}.
\elabel

\begin{Def}
The coefficients~$A_m$, $m\in\Z^*$,
are called the \emph{\'Ecalle-Voronin invariants} of the simple parabolic germ~$f$.
\end{Def}


%
This name is motivated by \'Ecalle-Voronin's classification result
(\cite{Eca81}, \cite{Vo}, \cite{Mal_Bour}):
(i) two simple parabolic germs $f,g$ are analytically conjugate if and
only if there exists $c\in\C$ such that, for all~$m$,
$A_m(g) = \ee^{2\pi\I mc} A_m(f)$
(direct consequence of the results of this section);
(ii) any pair of Fourier series of the form
$\big( \sum_{m\geq1} A_m \ee^{2\pi\I mZ}, A_0 + \sum_{m\geq1} A_{-m} \ee^{-2\pi\I mZ} \big)$,
where the first (\resp second) one is holomorphic in an upper (\resp lower) half-plane,
can be obtained as
$( h_*\up-\id, h_*\low-\id )$
for a simple parabolic germ~$f$
(this part of the result is more difficult).


By definition, the \emph{horn maps} $H_*\uplow(w)$ are the lifted horn
maps $h_*\uplow$ represented in the coordinate $w=\ee^{2\pi\I Z}$,
\ie $H_*\uplow(w) \defeq \exp\big( 2\pi \I h_*\uplow(z))$, so that one
gets a germ at~$0$ and a germ at~$\infty$ on the Riemann sphere:
\[
H_*\up(w)=w + 2\pi\I A_1 w^2 + O(w^3) \in w\C\{w\},
\ens
H_*\low (w)=
\ee^{4\pi^2\rho} \big(w + 2\pi\I A_{-1} + O(w^{-1}) \big)\in w\C\{w\ii\}.
\]
Some authors also refer to them as the \'Ecalle-Voronin invariants.
The parabolic renormalization operator~$\gR$, of great importance in recent developments
of complex dynamics, is defined by $\gR \col F \mapsto H_*\up$
(recall that $F(w) = w + c w^2 + O(w^3) \in w\C\{w\}$ was assumed to
be a simple parabolic germ at~$0$, \ie $c\neq0$, and notice that $\gR
F$ is itself a parabolic germ at~$0$, but not necessarily simple:
this requires $A_1\neq 0$).

%
In this article, we have only exploited the information on the principal branch
of~$\hat\ph$ given by Theorem~\ref{ThmResur}.
The reader is referred to \cite{DSdeux} for an investigation of the
singularities of all the branches of~$\hat\ph$ and their relation to the
\'Ecalle-Voronin invariants,
which shows how the horn maps 
are encoded in the Borel plane.


\newpage

\noindent{\footnotesize \emph{Acknowledgements.}
  The authors are grateful to F.~Fauvet and M.~Yampolsky for fruitful
  discussions and encouragements.
  The first author expresses thanks to the Centro di Ricerca
  Matematica Ennio De Giorgi and Fibonacci Laboratory for
  hospitality.
  The second author acknowledges the support of the French National
  Research Agency under the reference ANR-12-BS01-0017.  }

\medskip



\vspace{1cm}

\noindent
Artem Dudko\\[1ex]
Institute for Mathematical Sciences \\
University of Stony Brook, NY, USA

\bigskip

\bigskip

\noindent
David Sauzin\\[1ex]
CNRS UMI 3483 - Laboratoire Fibonacci,\\
Centro di Ricerca Matematica Ennio De Giorgi,\\
Scuola Normale Superiore di Pisa, Italy


\end{document}